\newtheorem{thm}{Theorem}[section]
\newtheorem{lem}[thm]{Lemma}
\newtheorem{prop}[thm]{Proposition}
\theoremstyle{definition}
\theoremstyle{remark}
\newcommand{\NN}{\mathbb{N}}
\newcommand{\ZZ}{\mathbb{Z}}
\newcommand{\SL}{\text{SL}_2(\ZZ)}
\newcommand{\mat}[4]{\begin{pmatrix} #1 & #2 \\ #3 & #4 \end{pmatrix}}
\title{Linking congruences for PED and POD partitions}
\author{Dalen Dockery}
\address{Department of Mathematics, University of Tennessee, Knoxville, TN 37996, USA}
\email{ddocker5@vols.utk.edu}
\author{Marie Jameson}
\address{Department of Mathematics, University of Tennessee, Knoxville, TN 37996, USA}
\email{mjameso2@utk.edu}
\begin{document}

\begin{abstract}
Recent work of Garvan, Sellers, Smoot, and others has made connections between infinite
families of congruences for various partition functions. Here, we apply this approach to families of congruences for PED and POD partitions and find that they are naturally linked to congruence families for overpartitions into odd parts and overpartitions. 
\end{abstract}

\maketitle

\section{Introduction}

A recent development in the theory of partition congruences is the notion of congruence multiplicity, which occurs when divisibility properties of a single family of modular functions manifest as simultaneous congruence families for multiple partition functions. This phenomenon was first observed by Garvan, Sellers, and Smoot (\cite{GSS}) during their study of generalized Frobenius partitions, in which they established the logical equivalence of the congruence families
\[c\phi_2(n) \equiv 0 \pmod{5^\alpha}, \quad c\psi_2(m) \equiv 0 \pmod{5^\alpha}\]
for all $\alpha \geq 1$, where $m,n \geq 0$ satisfy $12n \equiv 1 \pmod{5^\alpha}$ and $-6m \equiv 1 \pmod{5^\alpha}.$ Garvan, Sellers, and Smoot linked these congruences by first defining sequences of modular functions that enumerate $c\phi_2(n)$ and $c\psi_2(m)$ in the arithmetic progressions of interest and subsequently mapping one sequence to the other via a conjugated Atkin-Lehner involution. As exemplified their work, congruence multiplicity is a very alluring method to prove new congruence families, as it allows one to bypass long, tedious direct proofs and instead connect to congruences existing in the literature.

Sellers and Smoot (\cite{SellersSmoot}) utilized this technique to connect congruences families for PEND and POND partitions, which are combinatorial objects that enumerate the number of partitions in which the even and odd parts are not distinct, respectively. More specifically, they studied the congruences
\begin{align}
\mathrm{pend}\left(3^{2\alpha+1}n+\frac{17\cdot 3^{2\alpha}-1}{8}\right) &\equiv 0 \pmod{3}, \label{eq:pendcong} \\
\mathrm{pond}\left(3^{2\alpha+1}n + \frac{23 \cdot 3^{2\alpha}+1}{8}\right) &\equiv 0 \pmod{6}, \label{eq:pondcong} 
\end{align}
where $\alpha \geq 1$ and $n \geq 0$. Here $\mathrm{pend}(n)$ and $\mathrm{pond}(n)$ count the number of PEND and POND partitions of $n$; these congruences are due to Sellers (\cite{Sellers24}). 

In light of their similar combinatorial definitions to PEND and POND partitions, Sellers and Smoot suggested that PED and POD partitions may also be connected in this way. Note that a PED (resp. POD) partition is one in which the even (resp. odd) parts are distinct, and these partitions are counted by the functions $\mathrm{ped}(n)$ and $\mathrm{pod}(n)$. As further evidence for the connection between PED and POD partitions, Sellers and Smoot noted that they satisfy congruences that bear a striking resemblance to \eqref{eq:pendcong}-\eqref{eq:pondcong}. In particular, Andrews, Hirschhorn, and Sellers (\cite{HSpod,AHSped}) proved that
\begin{align}
\mathrm{ped}\left(3^{2\alpha+1}n+\frac{17\cdot 3^{2\alpha}-1}{8}\right) &\equiv 0 \pmod{6}, \label{eq:pedcong} \\
\mathrm{pod}\left(3^{2\alpha+1}n + \frac{23 \cdot 3^{2\alpha}+1}{8}\right) &\equiv 0 \pmod{3}, \label{eq:podcong}
\end{align}
for all $\alpha\geq 1$ and $n\geq 0.$

Our goal in this work is to address this question of whether or not PED and POD partitions are linked via congruence multiplicity. Perhaps surprisingly, we find that the technique pioneered by Garvan, Sellers, Smoot (and others) does not link these partition congruences to each other. Instead, we show that POD partitions are linked to overpartitions, and similarly we show that PED partitions are linked to overpartitions with odd parts. Recall that an overpartition is a partition in which the first occurrence of each part size may be overlined, and to set notation, let $\overline{p}(n)$ (resp. $\overline{p_o}(n)$) count the number of overpartitions of $n$ (resp. overpartitions of $n$ with odd parts). Hirschhorn and Sellers (\cite{HSopnodd,HSoverpartitions}) established the congruences
\begin{align}
\overline{p_o} \left(3^{2\alpha}(3n+1)\right) \equiv 0 \pmod{6}, \label{eq:pobarcong} \\
\overline{p} \left( 3^{2\alpha}(3n+2)\right) \equiv 0\pmod{12}, \label{eq:pbarcong}
\end{align}
respectively, for all $\alpha\geq 1$ and $n\geq 0.$

In order to link the congruences \eqref{eq:pedcong}-\eqref{eq:pbarcong}, we first describe them using families of modular functions $L_\alpha^{\mathrm{ped}}, L_\alpha^{\mathrm{pod}},
L_\alpha^{\overline{p_o}},$ and $L_\alpha^{\overline{p}}$ (which are defined explicitly in Section \ref{sec:inf_fams}). Our first main theorem connects these modular functions using a conjugated Atkin-Lehner operator $\gamma$ (which is defined explicitly in Section \ref{sec:proof}).
\begin{thm} \label{thm:congfams}
For all $\alpha\geq 1,$ we have
\begin{align*}
L_\alpha^{\mathrm{ped}} \mid \gamma &= (-1)^\alpha \cdot L_\alpha^{\overline{p_o}}\\
L_\alpha^{\mathrm{pod}} \mid \gamma &= (-1)^{\alpha+1}\frac{1}{4} \cdot L_\alpha^{\overline{p}}.
\end{align*}
\end{thm}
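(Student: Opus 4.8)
The plan is to prove both identities together by induction on $\alpha$, leveraging the uniform recursive construction of the four families given in Section~\ref{sec:inf_fams}. The first step is to record the four counting functions as eta-quotients, e.g.
\[
\sum_{n\ge 0}\mathrm{ped}(n)\,q^n=\frac{(q^4;q^4)_\infty}{(q;q)_\infty},\qquad
\sum_{n\ge 0}\overline{p_o}(n)\,q^n=\frac{(q^2;q^2)_\infty^{3}}{(q;q)_\infty^{2}\,(q^4;q^4)_\infty},
\]
together with the companion quotients for $\mathrm{pod}$ and $\overline p$. These exhibit each initial member $L_1^{\bullet}$ as an explicit eta-quotient on a fixed $\Gamma_0(N)$ (with $N$ a multiple of $12$), and display each passage $L_\alpha^{\bullet}\mapsto L_{\alpha+1}^{\bullet}$ as one application of a fixed Atkin--Lehner--Uehara ($U$-type) operator preceded by multiplication by a fixed modular function. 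Before starting the induction I would verify that $\gamma$ normalizes $\Gamma_0(N)$ and carries the cusp governing the localization of $\mathrm{ped}$ to that governing $\overline{p_o}$ (and likewise $\mathrm{pod}$ to $\overline p$); this is the structural reason the identities can hold at all.

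For the base case $\alpha=1$ I would compute directly. Applying the matrix $\gamma$ from Section~\ref{sec:proof} to the eta-quotient defining $L_1^{\mathrm{ped}}$ and simplifying via the transformation law of $\eta$ under $\SL$, together with the Atkin--Lehner scaling $\tau\mapsto -1/(N\tau)$ built into $\gamma$, the accumulated roots of unity and powers of the level should collapse to the factor $-1=(-1)^1$, giving $L_1^{\mathrm{ped}}\mid\gamma=-L_1^{\overline{p_o}}$. The identical computation for $L_1^{\mathrm{pod}}$ should instead produce $(-1)^{2}\tfrac14=\tfrac14$, where the constant $\tfrac14$ traces back to the Atkin--Lehner rescaling of the eta-factors at the $2$-part of the level (the ratio of the $\mathrm{pod}$ and $\overline p$ generating functions being $(q;q)_\infty/(q^4;q^4)_\infty$).

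For the inductive step the crux is an intertwining relation between $\gamma$ and the recursion operator. Writing $\mathcal U^{\bullet}$ for the operator sending $L_\alpha^{\bullet}$ to $L_{\alpha+1}^{\bullet}$, I would establish
\[
\mathcal U^{\mathrm{ped}}(X)\mid\gamma=-\,\mathcal U^{\overline{p_o}}(X\mid\gamma),\qquad
\mathcal U^{\mathrm{pod}}(Y)\mid\gamma=-\,\mathcal U^{\overline p}(Y\mid\gamma),
\]
so that each increment of $\alpha$ contributes one factor of $-1$ while the constant rides along unchanged; combined with the base case this yields exactly $(-1)^\alpha$ and $(-1)^{\alpha+1}\tfrac14$ and closes the induction. \emph{This intertwining is the main obstacle.} The $U$-type operator is defined through a set of coset representatives for a Hecke/Atkin--Lehner double coset, and commuting it past the conjugated involution $\gamma$ requires re-matching those representatives modulo $\Gamma_0(N)$ while scrupulously tracking the eta-multiplier system and the automorphy factor of $\gamma$. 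It is precisely this accounting of roots of unity and square roots of the level --- rather than any conceptual difficulty --- that must reproduce the exact sign $(-1)^\alpha$ and the exact constant $\tfrac14$, and keeping every phase correct is the delicate heart of the argument.
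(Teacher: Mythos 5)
Your route is genuinely different from the paper's, and the difference matters. The paper does not induct on $\alpha$ at all: it proves the identities for all $\alpha$ simultaneously by working upstairs with the pre-images $P_\alpha^{*}\in\mathcal{M}(\Gamma_0(12\cdot 3^{2\alpha+1}))$, computing $\eta(\delta\gamma\tau)$ factor by factor via the explicit matrix factorization
\[\mat{A}{B}{C}{D} = \mat{A/g}{-y}{C/g}{x} \mat{g}{Bx+Dy}{0}{(AD-BC)/g}\]
and Lemma \ref{lem:eta_trans}; the sign $(-1)^\alpha$ and the constant $\tfrac14$ fall out of the accumulated $24$th roots of unity $\varepsilon(a,b,c,d)$, whose arguments contain $3^{2\alpha+1}$. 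The theorem then follows in one step from $P_\alpha^{\mathrm{ped}}\mid\gamma=(-1)^\alpha P_\alpha^{\overline{p_o}}$ (and its $\mathrm{pod}$ analogue) together with the fact that $U_3$ commutes with $W$ and $\nu$. What your induction buys is that the multiplier bookkeeping is done once (for the base case and once for the intertwining) rather than for a general $\alpha$; what it costs is scaffolding the paper never builds and that you cannot simply cite. Concretely: (i) the recursion $L_{\alpha+1}^{\bullet}=U_9(S^{\bullet}\cdot L_\alpha^{\bullet})$ is not ``displayed'' by the construction in Section \ref{sec:inf_fams} --- it must be derived (it does follow from Lemma \ref{lem:U_PS} applied to $P_{\alpha+1}^{\bullet}/P_\alpha^{\bullet}$, which is a series in $q^{3^{2\alpha+1}}$, but you need to write down the four multipliers $S^{\bullet}$ explicitly); (ii) your intertwining identity then reduces to $S^{\mathrm{ped}}\mid\gamma'=-S^{\overline{p_o}}$ for a suitable lift $\gamma'$, which is exactly the same kind of eta-multiplier computation the paper does, so nothing is saved conceptually; and (iii) the operator $\gamma$ in the paper is \emph{not} a single fixed matrix --- its entries involve $3^{2\alpha+1}$, so the level (and hence the matrix) changes as $\alpha$ increments, and your intertwining relation as written silently identifies the $\gamma$ appearing on its two sides. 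That level-matching is the one point in your outline that would actually fail if taken literally and must be repaired before the induction closes. Finally, note that as written your argument is a plan rather than a proof: both the base case and the intertwining computation, which you correctly identify as the heart of the matter, are deferred.
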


To further strengthen the connection between these partition functions, we also note that they satisfy related internal congruences. For instance, we have
\begin{align*}
\mathrm{ped}(3n+1) &\equiv -\mathrm{ped}(27n+10) \hspace{-10em} &&\pmod{6}\\
\overline{p_o}(3n) &\equiv \overline{p_o}(27n) &&\pmod{12}
\end{align*}
for all $n\geq 0$ (and similar internal congruences for $\mathrm{pod}(n)$ and $\overline{p}(n)$ as described in Section \ref{sec:internal}). (In fact, these internal congruences could be used as a tool to prove equations \eqref{eq:pedcong}-\eqref{eq:pbarcong} using induction.) Then after defining appropriate modular functions $L^{\mathrm{ped}}, L^{\mathrm{pod}},
L^{\overline{p_o}},$ and $L^{\overline{p}}$ and operator $\gamma,$ we have the following second main theorem, which connects their internal congruences as well.

\begin{thm}\label{thm:main2}
For all $\alpha\geq 1,$ we have
\begin{align*}
L^{\mathrm{ped}} \mid \gamma &= \frac{1}{2} L^{\overline{p_o}}\\
L^{\mathrm{pod}} \mid \gamma &= -\frac{1}{8} L^{\overline{p}}.
\end{align*}
\end{thm}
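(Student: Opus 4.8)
The plan is to mirror the argument that establishes Theorem~\ref{thm:congfams}, now applied to the single, $\alpha$-independent modular functions attached to the internal congruences, so that each line of the theorem becomes one finite identity of modular functions rather than a statement to be propagated in $\alpha$. The natural starting point is the eta-quotient form of the four generating functions: with $q = e^{2\pi i\tau}$ and $\eta(\tau) = q^{1/24}\prod_{n\ge1}(1-q^n)$, one checks that $\sum_n \mathrm{ped}(n)q^n = q^{-1/8}\,\eta(4\tau)/\eta(\tau)$, $\sum_n \mathrm{pod}(n)q^n = q^{1/8}\,\eta(2\tau)/(\eta(\tau)\eta(4\tau))$, $\sum_n \overline{p_o}(n)q^n = \eta(2\tau)^3/(\eta(\tau)^2\eta(4\tau))$, and $\sum_n \overline{p}(n)q^n = \eta(2\tau)/\eta(\tau)^2$. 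After the sieving that isolates the progressions appearing in the internal congruences (e.g.\ the $3n+1$ and $27n+10$ residues for $\mathrm{ped}$, which introduces the prime $3$ into the level), the functions $L^{\mathrm{ped}},L^{\mathrm{pod}},L^{\overline{p_o}},L^{\overline{p}}$ of Section~\ref{sec:internal} are explicit weight-$0$ eta quotients, hence modular functions, on a common group $\Gamma_0(N)$.

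I would first record the conjugated Atkin--Lehner operator $\gamma$ in the form $\gamma = \rho^{-1}W_Q\rho$, where $W_Q$ is an Atkin--Lehner involution on $\Gamma_0(N)$ and $\rho$ is the diagonal rescaling that reconciles the level-$4$ ``PED/POD side'' with the ``overpartition side.'' Because the $L$'s have weight $0$, the slash $L^{\mathrm{ped}}\mid\gamma$ equals $L^{\mathrm{ped}}(\gamma\tau)$ up to a single constant automorphy factor, so I would compute the action of $\gamma$ on each factor $\eta(\delta\tau)$ of $L^{\mathrm{ped}}$ via the transformation law for $\eta$ under $\mathrm{GL}_2^+(\mathbb{Q})$: each $\eta(\delta\tau)$ is sent to a constant times $\eta(\delta'\tau)$ for an explicit $\delta'$, the constant being a $24$th root of unity (the eta multiplier) times a power of $Q^{1/2}$. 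Collecting these, I expect the eta-quotient data of $L^{\mathrm{ped}}\mid\gamma$ to coincide with that of $L^{\overline{p_o}}$, and likewise $L^{\mathrm{pod}}\mid\gamma$ with $L^{\overline{p}}$; this determines the shape of both identities and leaves only the overall scalar.

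The main obstacle is pinning down the exact scalars $\tfrac12$ and $-\tfrac18$ --- sign included --- rather than an identity valid only up to a root of unity. These constants are products of several eta multipliers together with the $Q^{1/2}$ normalizations of $W_Q$ and $\rho$, so the crux is disciplined bookkeeping of the Dedekind-eta automorphy factors under the conjugated involution, where a lone sign or factor of $2$ changes the answer; I would track the multiplier contribution of each $\eta$-factor separately and only then multiply. To certify that the two sides agree exactly, not merely to leading order, I would apply the valence formula on $\Gamma_0(N)$: since both $L^{\mathrm{ped}}\mid\gamma$ and $\tfrac12 L^{\overline{p_o}}$ are holomorphic modular functions, showing that their orders agree at every cusp forces their difference (or ratio) to be constant, which a short $q$-expansion check then identifies. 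Because the functions carry no $\alpha$-dependence, this certification is a single finite computation --- precisely why the internal-congruence identities come out cleaner than the $\alpha$-indexed identities of Theorem~\ref{thm:congfams}.
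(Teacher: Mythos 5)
Your overall strategy --- conjugate an Atkin--Lehner involution by the $q\mapsto -q$ map, push each Dedekind eta factor through the resulting matrix $\gamma$ via the transformation law, and keep disciplined track of the multipliers --- is the same as the paper's. But there are two concrete problems with how you have set it up. First, the functions $L^{\mathrm{ped}}, L^{\mathrm{pod}}, L^{\overline{p_o}}, L^{\overline{p}}$ are \emph{not} eta quotients: each is $U_3$ applied to a function $P^*$ on $\Gamma_0(36)$ that is itself a \emph{sum} of several eta quotients (one summand for each generating function appearing in Proposition \ref{prop:gen_funcs}), and applying $U_3$ destroys the eta-quotient structure. So your plan to ``compute the action of $\gamma$ on each factor $\eta(\delta\tau)$ of $L^{\mathrm{ped}}$'' cannot be carried out on $L^{\mathrm{ped}}$ directly. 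The missing step is the one the argument actually leans on: perform the factor-by-factor computation on each eta-quotient summand of $P^*$ at level $36$, establish $P^{\mathrm{ped}}\mid\gamma = \tfrac12\, P^{\overline{p_o}}$ and $P^{\mathrm{pod}}\mid\gamma = -\tfrac18\, P^{\overline{p}}$, and only then invoke the fact that $U_3$ commutes with $W$ and with $\nu$ (hence with $\gamma$) to transport these identities to the $L^* = U_3(P^*)$. Without that commutation your argument never reaches the statement of the theorem.

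Second, your description of the conjugating data is off in a way that matters computationally. Here $\gamma = \nu W\nu$ with $\nu = \mat{1}{1/2}{0}{1}$ a half-translation (realizing $q\mapsto -q$), not a diagonal rescaling; and because of this half-translation it is \emph{not} true that every $\eta(\delta\tau)$ is sent to a constant multiple of a single $\eta(\delta'\tau)$. For certain $\delta$ the required matrix factorization leaves a residual shift by $\tfrac12$, and one must use $\eta\left(\tau+\tfrac12\right) = e^{\pi i/24}\,\eta(2\tau)^3/\bigl(\eta(\tau)\eta(4\tau)\bigr)$, so a single eta factor becomes a three-term eta quotient. This is precisely how blocks like $f_2^3/(f_1^2f_4)$ on the overpartition side arise from single eta factors on the PED/POD side; ignoring it would make the \emph{shape} of $P^*\mid\gamma$ come out wrong, not merely the scalar. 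Your concluding valence-formula/Sturm certification is a reasonable (if loosely stated --- matching cusp orders alone does not force a constant ratio; one should clear poles and compare coefficients up to a Sturm bound) way to pin down the constants $\tfrac12$ and $-\tfrac18$ once the correct level-$36$ eta-quotient identities are in hand, but it does not substitute for the two points above.
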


The remainder of this paper is organized as follows. Section \ref{sec:background} collects the necessary preliminary results that we require on modular functions. In Section \ref{sec:inf_fams}, we define the sequences of modular functions $L_\alpha^*$ necessary for the proof of Theorem \ref{thm:congfams}, which we carry out in Section \ref{sec:proof}. Finally, in Section \ref{sec:internal}, we prove Theorem \ref{thm:main2} by calculating the appropriate generating functions, defining the modular functions $L^*,$ and applying the appropriate operator $\gamma$ to complete the proof.


\section{Background}\label{sec:background}
In this work, we assume familiarity with the theory of modular functions, though we briefly summarize below the crucial facts that we shall need concerning Atkin's $U_d$-operator and the Dedekind eta-function. For a more detailed description on these topics, see e.g. \cite{DS}.

Let $\mathcal{M}(\Gamma_0(N))$ denote the $\mathbb{C}$-vector space of all weakly holomorphic modular functions of level $\Gamma_0(N).$ For integers $d \geq 1$, Atkin's $U_d$-operator is defined on $\mathcal{M}(\Gamma_0(N))$ by its action on Fourier series:
\[U_d \left(\sum\limits_{n \geq n_0} a(n) q^n \right) \coloneqq \sum\limits_{dn \geq n_0} a(dn) q^n,\]
where $q=e^{2\pi i\tau}.$

The following two lemmas summarize important facts about the $U_d$-operator. 

\begin{lem}[Lemmas 6 and 7 of \cite{AtkinLehner}]\label{lem:Umap}
Suppose $d, N$ are positive integers such that $d \mid N$. Then 
\[U_d : \mathcal{M}(\Gamma_0(N)) \rightarrow \mathcal{M}(\Gamma_0(N)).\]
Moreover, if $d^2 \mid N$ then 
\[U_d : \mathcal{M}(\Gamma_0(N)) \rightarrow \mathcal{M}\left(\Gamma_0\left(\frac{N}{d}\right)\right).\]
\end{lem}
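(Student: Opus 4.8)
The plan is to express $U_d$ as an average of weight-zero slash operators and then deduce both mapping statements from a single coset-rearrangement argument. Writing $\alpha_j = \mat{1}{j}{0}{d}$ and letting $(f \mid \beta)(\tau) = f(\beta\tau)$ denote the weight-zero action, the fact that $\frac{1}{d}\sum_{j=0}^{d-1} e^{2\pi i n j/d}$ equals $1$ when $d \mid n$ and $0$ otherwise, applied term-by-term to the Fourier series of $f$, gives
\[
U_d f = \frac{1}{d}\sum_{j=0}^{d-1} f \mid \alpha_j .
\]
Granting this identity, holomorphy of $U_d f$ on $\HH$ is automatic since each $\alpha_j$ preserves $\HH$, so the genuine content of the lemma is the $\Gamma_0$-invariance together with meromorphy at the cusps.

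For the invariance I would fix $\gamma = \mat{a}{b}{c}{e}$ in $\Gamma_0(N)$ (first case) or in $\Gamma_0(N/d)$ (second case) and seek, for each $j$, a factorization $\alpha_j \gamma = \gamma_j \, \alpha_{j'}$ with $\gamma_j \in \SL$ and $0 \le j' < d$. Solving $\gamma_j = \alpha_j \gamma \alpha_{j'}^{-1}$ shows that $\det \gamma_j = 1$ holds automatically, that the lower-left entry equals $dc$, and that integrality of the one remaining entry is equivalent to the single congruence $(a+jc)\,j' \equiv b + je \pmod d$. The decisive observation is that $d \mid c$ in both cases — directly from $d \mid N \mid c$ in the first, and from $d^2 \mid N$ forcing $d \mid (N/d) \mid c$ in the second — so reducing $ae - bc = 1$ modulo $d$ gives $\gcd(a+jc, d) = 1$ and hence a unique solution $j' \in \{0,\dots,d-1\}$. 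Moreover the lower-left entry $dc$ is divisible by $N$ in both cases (since $c$ is a multiple of $N$, resp.\ of $N/d$), so $\gamma_j \in \Gamma_0(N)$, which is exactly the level of $f$; thus $f \mid \gamma_j = f$ and therefore $f \mid \alpha_j \gamma = f \mid \alpha_{j'}$. Since the $\alpha_0, \dots, \alpha_{d-1}$ represent $d$ distinct cosets of $\Gamma_0(N)$ and $\gamma$ is invertible, the assignment $j \mapsto j'$ is injective, hence a bijection of $\{0,\dots,d-1\}$; summing the relation over $j$ then yields $(U_d f) \mid \gamma = U_d f$, giving invariance under $\Gamma_0(N)$ (resp.\ $\Gamma_0(N/d)$).

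It remains to check that $U_d f$ is weakly holomorphic, i.e.\ meromorphic at the cusps; for this I would note that $U_d f$ is a finite $\CC$-linear combination of the functions $f \mid \alpha_j$, each the composition of the weakly holomorphic $f$ with a fractional linear map and hence with at worst a pole in its local expansion at every cusp, so the same holds for their average. I expect the main obstacle to be the coset computation of the second paragraph: one must confirm that the hypothesis ($d \mid N$ versus $d^2 \mid N$) is precisely what makes both (i) the congruence for $j'$ solvable, via $\gcd(a+jc,d)=1$, and (ii) the lower-left entry $dc$ divisible by $N$, and then verify that $j \mapsto j'$ is a genuine bijection rather than merely well defined. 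The Fourier-coefficient identity and the cusp bookkeeping are routine once this rearrangement is in place.
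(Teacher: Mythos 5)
Your proof is correct and is essentially the classical coset-rearrangement argument of Atkin--Lehner, which is exactly what the paper relies on: the paper gives no proof of its own, citing Lemmas 6 and 7 of that reference, and your decomposition $U_d f = \frac{1}{d}\sum_j f \mid \alpha_j$ with the factorization $\alpha_j\gamma = \gamma_j\alpha_{j'}$ is the standard route there. In particular you correctly isolate the key point that $d \mid c$ (and $N \mid dc$) holds under either hypothesis, which is what places $\gamma_j$ in $\Gamma_0(N)$ and makes the invariance under the larger group $\Gamma_0(N/d)$ go through in the second case.
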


\begin{lem}\label{lem:U_PS}
Let $F(q)$ and $G(q)$ be $q$-series and take $d \in \NN.$ Then
\[U_d\left(F(q^d)G(q)\right) = F(q) \cdot U_d\left(G(q)\right).\]
\end{lem}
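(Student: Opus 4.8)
The plan is to prove this by a direct computation at the level of Fourier coefficients, exploiting the defining formula for $U_d$ given just above. The essential observation is that multiplying by $F(q^d)$ only introduces exponents that are divisible by $d$, so this factor passes transparently through the coefficient-extraction performed by $U_d$; everything else is index bookkeeping.

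First I would fix generic Fourier expansions $F(q) = \sum_k a(k) q^k$ and $G(q) = \sum_n b(n) q^n$, so that $F(q^d) = \sum_k a(k) q^{dk}$ and the product reads $F(q^d) G(q) = \sum_{k,n} a(k) b(n) q^{dk+n}$. Reading off the coefficient of $q^M$ gives $\sum_{dk+n=M} a(k) b(n)$. Next I would apply the definition of $U_d$, which keeps only the terms with $M = dm$ and reindexes $q^{dm} \mapsto q^m$. Imposing $M = dm$ forces $n = d(m-k)$, so the coefficient of $q^m$ in $U_d\!\left(F(q^d) G(q)\right)$ is exactly $\sum_k a(k)\, b\!\left(d(m-k)\right)$.

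Finally I would expand the right-hand side and compare. Since $U_d(G(q)) = \sum_j b(dj) q^j$, the Cauchy product $F(q) \cdot U_d(G(q))$ has $q^m$-coefficient $\sum_{k+j=m} a(k) b(dj) = \sum_k a(k)\, b\!\left(d(m-k)\right)$, which agrees termwise with the expression obtained on the left. Equality of all Fourier coefficients then yields the stated identity.

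I do not anticipate a genuine obstacle here: the content is purely combinatorial manipulation of summation indices, and the single point deserving a moment's care is the substitution $n = d(m-k)$ that synchronizes the ranges of summation on the two sides. It is also worth remarking that the identity is formal, holding for arbitrary $q$-series regardless of any modularity, so no convergence or analytic input is required and the lemma can be invoked freely even when $F$ or $G$ are not themselves modular.
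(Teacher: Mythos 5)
Your proof is correct: the coefficient-level computation, including the key substitution $n = d(m-k)$ that matches the two sides, is exactly the standard argument for this identity, and your remark that it is a purely formal statement requiring no modularity is accurate. The paper itself states this lemma without proof (treating it as well known), so your write-up supplies precisely the argument the authors implicitly have in mind.
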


Most of the modular functions of interest here arise from Dedekind's eta-function, given by
\[\eta(\tau) \coloneqq q^{1/24} \prod\limits_{n=1}^\infty (1-q^n).\]
More specifically, an eta-quotient is a function of the form 
\[\prod\limits_{\delta \mid N} \eta(\delta \tau)^{r_\delta}\]
for integers $N, r_\delta,$ and the following well-known results of Newman and Ligozat allow one to easily check the modularity of a given eta-quotient, as well as its order of vanishing at cusps. 

\begin{lem}[Theorem 1.64 of \cite{OnoWeb}]\label{lem:eta-quotient1}
Suppose that the eta-quotient $\prod\limits_{\delta \mid N} \eta(\delta \tau)^{r_\delta}$ satisfies
\begin{enumerate}
\item $\sum\limits_{\delta \mid N} r_{\delta} = 0,$
\item $\sum\limits_{\delta \mid N} \delta r_\delta \equiv 0 \pmod{24},$
\item $\sum\limits_{\delta \mid N} \frac{N}{\delta} r_\delta \equiv 0 \pmod{24},$
\item $\prod\limits_{\delta \mid N} \delta^{r_\delta}$  is a perfect square in $\mathbb{Q}$. 
\end{enumerate}
Then $\prod\limits_{\delta \mid N} \eta(\delta \tau)^{r_\delta} \in \mathcal{M}(\Gamma_0(N)).$ 
\end{lem}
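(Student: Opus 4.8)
The plan is to verify directly the four defining properties of membership in $\mathcal{M}(\Gamma_0(N))$ for the eta-quotient $f(\tau) \coloneqq \prod_{\delta \mid N}\eta(\delta\tau)^{r_\delta}$: holomorphy and nonvanishing on $\mathbb{H}$, meromorphy at the cusps, weight $0$, and invariance under $\Gamma_0(N)$. Since $\eta$ is holomorphic and nonvanishing on $\mathbb{H}$, so is every $\eta(\delta\tau)$ and hence the finite product $f$; and since each $\eta(\delta\tau)$ has a $q$-expansion given by a fractional power of $q$ times a unit power series, $f$ has at worst a pole at each cusp, which is exactly what ``weakly holomorphic'' permits. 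Thus the only substantive points are the weight and the transformation behavior.

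The weight is immediate: each $\eta(\delta\tau)$ is a form of weight $\tfrac12$, so $f$ has weight $\tfrac12\sum_\delta r_\delta$, which is $0$ by hypothesis (1). For invariance, fix $\gamma = \mat{a}{b}{c}{d} \in \Gamma_0(N)$, so that $N \mid c$ and hence $\delta \mid c$ for every $\delta \mid N$. The key observation is that $\delta\gamma\tau = \gamma_\delta(\delta\tau)$, where $\gamma_\delta \coloneqq \mat{a}{b\delta}{c/\delta}{d} \in \mathrm{SL}_2(\ZZ)$ (it has integer entries precisely because $\delta \mid c$, and determinant $1$); this lets me apply the general transformation law of $\eta$ one factor at a time. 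Writing $\eta(\gamma'\tau) = \varepsilon(\gamma')(c'\tau+d')^{1/2}\eta(\tau)$ for the $24$th-root-of-unity multiplier $\varepsilon$ attached to $\gamma' = \mat{a'}{b'}{c'}{d'}$, and noting $(c/\delta)(\delta\tau)+d = c\tau+d$, I obtain
\[
f(\gamma\tau) = (c\tau+d)^{\frac12\sum_\delta r_\delta}\Big(\prod_{\delta\mid N}\varepsilon(\gamma_\delta)^{r_\delta}\Big)f(\tau).
\]
By condition (1) the automorphy factor $(c\tau+d)^{\cdots}$ disappears, so everything reduces to proving that $\prod_{\delta\mid N}\varepsilon(\gamma_\delta)^{r_\delta} = 1$.

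This last reduction is where the remaining hypotheses enter, and it is the main obstacle. I would split into cases. When $c = 0$ the relevant generator is the translation $\tau \mapsto \tau+1$, and $\eta(\delta(\tau+1)) = e^{\pi i\delta/12}\eta(\delta\tau)$ gives $f(\tau+1) = e^{\pi i(\sum_\delta \delta r_\delta)/12}f(\tau)$, which is trivial precisely when condition (2) holds. When $c \neq 0$ I would insert the explicit evaluation of $\varepsilon(\gamma_\delta)$ in terms of a Dedekind sum $s(d, c/\delta)$ together with a quadratic-residue (Jacobi-symbol) factor. The exponential part, governed by the $\frac{a+d}{12(c/\delta)}$ and Dedekind-sum contributions, is forced to be trivial modulo $24$ by conditions (2) and (3) after applying the reciprocity law for Dedekind sums, while the sign part collapses to $+1$ exactly because condition (4) makes $\prod_\delta \delta^{r_\delta}$ a rational square. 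The delicate bookkeeping of tracking all the roots of unity and quadratic characters simultaneously modulo $24$, and confirming that conditions (2)--(4) are exactly what is needed, is the technical heart of the argument; it is precisely this computation of Newman and Ligozat (as recorded in \cite{OnoWeb}) that one ordinarily cites rather than reproving.
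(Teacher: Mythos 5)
The paper does not prove this lemma at all: it is quoted as Theorem 1.64 of \cite{OnoWeb} (the classical Newman--Ligozat criterion) and used as a black box, so there is no internal argument to compare against. Your preliminary reductions are all correct and standard --- the identity $\delta\gamma\tau = \gamma_\delta(\delta\tau)$ with $\gamma_\delta = \mat{a}{b\delta}{c/\delta}{d} \in \SL$ (valid since $\delta \mid N \mid c$), the cancellation of the automorphy factor $(c\tau+d)^{\frac{1}{2}\sum_\delta r_\delta}$ via condition (1), and the disposal of the $c=0$ case via condition (2) --- and you have correctly isolated the one substantive claim, namely that $\prod_{\delta \mid N}\varepsilon(\gamma_\delta)^{r_\delta} = 1$, together with a correct accounting of which hypothesis is responsible for which piece of that root of unity (conditions (2)--(3) plus Dedekind reciprocity for the exponential part, condition (4) for the quadratic character $\bigl(\tfrac{s}{d}\bigr)$ with $s = \prod_\delta \delta^{r_\delta}$). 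But that multiplier computation \emph{is} the theorem, and you explicitly defer it to the very source the paper cites; so as written your proposal is not an independent proof but rather the paper's citation preceded by a correct setup. That is a reasonable choice for a background lemma of this kind --- just be aware that nothing you wrote discharges the actual content, and if the lemma were genuinely at issue you would need to carry out the Dedekind-sum bookkeeping (or cite Newman and Ligozat directly) rather than gesture at it.
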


\begin{lem}[Theorem 1.65 of \cite{OnoWeb}]\label{lem:eta-quotient2}
The order of vanishing of the eta-quotient 
$\prod\limits_{\delta \mid N} \eta(\delta \tau)^{r_\delta} \in \mathcal{M}(\Gamma_0(N))$ at the cusp $[a/c]_N$ of $\Gamma_0(N)$ is given by
\[\frac{N}{24} \sum\limits_{\delta \mid N} \frac{\gcd(c,\delta)^2r_\delta}{\gcd(c,\frac{N}{c})c\delta}.\]
\end{lem}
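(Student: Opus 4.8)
The plan is to reduce the computation to a single eta factor and then track the leading power of $q$ in its expansion at the cusp. Recall that the order of vanishing of a weakly holomorphic modular function $f$ at the cusp $[a/c]_N$ (with $\gcd(a,c)=1$ and $c \mid N$) is defined by choosing $\sigma = \mat{a}{b}{c}{d} \in \SL$ with $\sigma(\infty) = a/c$ and reading off the smallest exponent in the Fourier expansion of $f(\sigma\tau)$ in the local uniformizer $q_h = e^{2\pi i \tau/h}$, where $h = N/\gcd(c^2,N) = N/(c\gcd(c,N/c))$ is the width of the cusp in $\Gamma_0(N)$. Since the order of a product of functions at a cusp is the sum of the orders of the factors, it suffices to compute $\mathrm{ord}_{[a/c]_N}\eta(\delta\tau)$ for each $\delta \mid N$ and then form the $r_\delta$-weighted sum.

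To handle a single factor, I would substitute $\tau \mapsto \sigma\tau$ into $\eta(\delta\tau)$, so that the transformation is governed by the integer matrix $\mat{\delta a}{\delta b}{c}{d}$ of determinant $\delta$. The crucial algebraic step is to factor
\[\mat{\delta a}{\delta b}{c}{d} = \gamma' \mat{A}{B}{0}{D}, \qquad \gamma' \in \SL,\ AD = \delta,\ A,D > 0.\]
The bottom-left entry of the product forces $A \mid c$, and since $A \mid \delta$ as well we must have $A \mid \gcd(c,\delta)$; taking $A = \gcd(c,\delta) =: g$ and $D = \delta/g$, one checks that $B$ can be chosen so that $\gamma'$ has integer entries (the binding congruence $c_1 B \equiv d \pmod{\delta/g}$, with $c_1 = c/g$, is solvable because $\gcd(c/g, \delta/g) = 1$), while $\det\gamma' = \det M / (AD) = 1$ holds automatically.

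Applying the transformation law $\eta(\gamma'\tau) = \varepsilon(\gamma')(c'\tau + d')^{1/2}\eta(\tau)$ peels off $\gamma'$ as a root-of-unity multiplier times a half-integral power of $(c'\tau + d')$, while the residual upper-triangular matrix acts by $\tau \mapsto (A\tau + B)/D$. Since the only source of a power of $q$ is the prefactor $q^{1/24}$ in $\eta$, writing $\eta((A\tau+B)/D) = e^{2\pi i (A\tau+B)/(24D)}\prod_{n\geq 1}(1 - \cdots)$ shows that the leading exponent in $q$ equals $A/(24D) = g^2/(24\delta) = \gcd(c,\delta)^2/(24\delta)$; in the width-adapted uniformizer $q_h$ this becomes an order of $h \cdot \gcd(c,\delta)^2/(24\delta)$. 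Substituting $h = N/(c\gcd(c,N/c))$ and summing against $r_\delta$ yields
\[\frac{N}{24}\sum_{\delta \mid N}\frac{\gcd(c,\delta)^2 r_\delta}{\gcd(c,N/c)\,c\,\delta},\]
as claimed. Crucially, the multiplier $\varepsilon(\gamma')$ and the factor $(c'\tau+d')^{1/2}$ contribute no power of $q$, so they are irrelevant to the order; they matter only for confirming that $f(\sigma\tau)$ is genuinely $q_h$-periodic with trivial multiplier, which is exactly what hypotheses (1)--(4) of Lemma \ref{lem:eta-quotient1} guarantee.

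The main obstacle is the gcd bookkeeping in the matrix factorization: verifying that $A = \gcd(c,\delta)$ is indeed forced and that the resulting $\gamma'$ can always be completed to an element of $\SL$, together with a clean identification of the cusp width $h = N/\gcd(c^2,N)$ and the elementary check that $\gcd(c^2,N) = c\gcd(c,N/c)$ for $c \mid N$. Once the factorization is pinned down, reading the leading exponent off $\eta$'s $q^{1/24}$ prefactor is routine, and linearity of the order over the product assembles the final formula.
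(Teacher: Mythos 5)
The paper offers no proof of this lemma: it is imported verbatim as Theorem 1.65 of \cite{OnoWeb} (Ligozat's criterion), so there is no internal argument to compare against. Your proposal is a correct, self-contained reconstruction of the standard proof of that cited theorem. The key steps all check out: for a single factor, the determinant-$\delta$ matrix $\mat{\delta a}{\delta b}{c}{d}$ does factor as $\gamma' \mat{A}{B}{0}{D}$ with $\gamma' \in \SL$; integrality of the lower-left entry $c/A$ of $\gamma'$ forces $A \mid \gcd(c,\delta)$, the choice $A = \gcd(c,\delta) = g$ works because the congruence $(c/g)B \equiv d \pmod{\delta/g}$ is solvable (as $\gcd(c/g,\delta/g)=1$) while the top-right entry $bA - aB$ of $\gamma'$ is automatically integral; the leading exponent $A/(24D) = \gcd(c,\delta)^2/(24\delta)$ then falls out of the $q^{1/24}$ prefactor of $\eta$, since the infinite product contributes constant term $1$ and the multiplier and automorphy factor carry no $q$-power; and for $c \mid N$ one indeed has $\gcd(c^2,N) = c\gcd(c,N/c)$, so multiplying by the width $h = N/(c\gcd(c,N/c))$ and summing against $r_\delta$ gives exactly the displayed formula. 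Two minor remarks: your claim that $A = \gcd(c,\delta)$ is \emph{forced} requires invoking $\gcd(a,c)=1$ (the Hermite form gives $A = \gcd(\delta a, c) = \gcd(\delta, c)$), which you do not state at that point --- though for the proof mere existence of one valid factorization suffices, since any such factorization computes the Fourier expansion of the same fixed function $\eta(\delta\sigma\tau)$; and you are right that the hypotheses of Lemma \ref{lem:eta-quotient1} are what guarantee a genuine expansion in integral powers of $q_h$, a point often glossed over.
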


Moreover, the Dedekind eta-function satisfies a remarkable transformation law with respect to the action of $\SL.$

\begin{lem}[Chapter 3 of \cite{Apostol}]\label{lem:eta_trans}
For all $\mat{a}{b}{c}{d} \in \SL,$ 
\[\eta\left(\frac{a\tau+b}{c\tau+d}\right) = (-i(c\tau+d))^{1/2} \cdot \varepsilon(a,b,c,d) \cdot \eta(\tau).\]
Here, $\varepsilon(a,b,c,d)$ is the 24th root of unity defined by
\[\varepsilon(a,b,c,d) \coloneqq \begin{cases} \exp\left(\frac{b\pi i}{12}\right), & c=0, d=1, \\ \exp\left(\pi i\left(\frac{a+d}{12c} - s(d,c) - \frac{1}{4}\right)\right), & c>0, \end{cases}\]
where $s(\cdot,\cdot)$ is the usual Dedekind sum.
\end{lem}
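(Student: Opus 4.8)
The plan is to establish the identity first on the two standard generators $T=\mat{1}{1}{0}{1}$ and $S=\mat{0}{-1}{1}{0}$ of $\SL$, and then to extend it to an arbitrary matrix by induction on a word in $S$ and $T$, carefully propagating both the $24$th root of unity $\varepsilon$ and the branch of the square-root automorphy factor. The $T$-case is immediate from the product expansion: writing $\eta(\tau)=q^{1/24}\prod_{n\ge 1}(1-q^n)$ with $q=e^{2\pi i\tau}$, the substitution $\tau\mapsto\tau+1$ leaves $\prod_{n\ge1}(1-q^n)$ unchanged (since $q$ is $1$-periodic) and multiplies the prefactor $q^{1/24}=e^{\pi i\tau/12}$ by $e^{\pi i/12}$, so that $\eta(\tau+1)=e^{\pi i/12}\eta(\tau)$; this is the $c=0$ branch of the stated formula, and iterating handles every $\mat{1}{b}{0}{1}$.

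The analytic heart is the $S$-transformation $\eta(-1/\tau)=(-i\tau)^{1/2}\eta(\tau)$. I would take a logarithm, $\log\eta(\tau)=\frac{\pi i\tau}{12}+\sum_{n\ge1}\log(1-q^n)$, expand each logarithm as $-\sum_{m\ge1}q^{mn}/m$, and interchange the order of summation to obtain a Lambert-type series in $\tau$. The effect of $\tau\mapsto-1/\tau$ on this series is then computed either by Poisson summation applied to the associated theta series or by a residue/contour-integral argument of Siegel--Dedekind type, and combining the transformed series with the image of the linear term $\frac{\pi i\tau}{12}$ assembles into the factor $(-i\tau)^{1/2}$. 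To fix the branch of the square root I would specialize to $\tau=it$ with $t>0$: there $\eta(it)$ and $\eta(i/t)$ are positive reals while $(-i\tau)^{1/2}=t^{1/2}>0$, which pins the sign, and analytic continuation over $\HH$ removes the restriction.

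With the two generators in hand, an arbitrary $\gamma=\mat{a}{b}{c}{d}\in\SL$ is treated by writing it as a word in $S,T$ and applying the generator laws repeatedly. Each application contributes a factor $e^{\pi i k/12}$ (from $T$) or the $S$-factor, and the accumulated square roots combine through the automorphy cocycle relation $j(\gamma_1\gamma_2,\tau)=j(\gamma_1,\gamma_2\tau)\,j(\gamma_2,\tau)$ for $j(\gamma,\tau)=c\tau+d$; when $c=0$ one lands back in the $T$-tower, recovering the first branch of $\varepsilon$, while for $c>0$ one collects the constant into a $24$th root of unity. The cleanest way to record this constant is via the induced transformation of $\log\eta$, which differs from $\log\eta(\tau)$ by $\tfrac12\log(c\tau+d)$ plus a $\tau$-independent term whose phase is governed by $\frac{a+d}{12c}$ and the Dedekind sum $s(d,c)$; exponentiating yields $\varepsilon(a,b,c,d)$, with the overall normalization fixed by comparison on a single known matrix such as $S$.

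I expect the main obstacle to be twofold and concentrated in the last step: first, proving the $S$-transformation itself, which is the only genuinely analytic input and requires a correct application of Poisson summation (or an equivalent contour argument) together with a careful branch determination; and second, verifying that the multiplier collected from an arbitrary generator word depends on $\gamma$ only through the stated $\varepsilon$ and not on the chosen word. This well-definedness is exactly where the Dedekind reciprocity law
\[s(d,c)+s(c,d)=-\tfrac14+\tfrac1{12}\!\left(\frac cd+\frac dc+\frac1{cd}\right)\]
enters, reconciling the Dedekind-sum contributions arising from different factorizations and simultaneously controlling the sign ambiguity $\sqrt{z_1}\sqrt{z_2}=\pm\sqrt{z_1z_2}$ in the square-root factors.
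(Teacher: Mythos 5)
Your proposal is a correct outline, but it proves the lemma along a genuinely different route than the source the paper relies on: the paper offers no proof at all, quoting the result from Apostol (Chapter 3), where the transformation law is established \emph{directly} for an arbitrary matrix with $c>0$ via Iseki's transformation formula --- a Mellin-transform/Hurwitz-zeta computation applied to the Lambert series for $\log\eta$, from which the Dedekind sum $s(d,c)$ emerges in a single stroke, with no induction on generator words. Your plan is instead the classical Rademacher-style argument: verify the law on $T$ and $S$ (your $T$-computation is immediate and your $S$-case via Siegel's contour argument or Poisson summation on the associated theta series is the standard analytic input), then propagate through words in $S$ and $T$ using the cocycle relation, with Dedekind reciprocity reconciling the phase contributions. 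What Apostol's route buys is that the closed formula for $\varepsilon(a,b,c,d)$ falls out uniformly, avoiding the bookkeeping you correctly identify as the main obstacle; what your route buys is more elementary machinery and a transparent view of why a consistent multiplier system exists. Two refinements to your sketch: the ``well-definedness'' you worry about is automatic, since the multiplier $\eta(\gamma\tau)/\bigl((-i(c\tau+d))^{1/2}\eta(\tau)\bigr)$ is a function of $\gamma$ alone (both sides are fixed functions); the actual content of your inductive step is that this multiplier \emph{equals} the stated closed formula, which reduces to the periodicity $s(d+c,c)=s(d,c)$ under right multiplication by $T$ and to Dedekind reciprocity under right multiplication by $S$, together with careful tracking of the $\pm 1$ ambiguity in $\sqrt{z_1}\sqrt{z_2}=\pm\sqrt{z_1z_2}$ (note that for $c>0$ one has $c\tau+d\in\HH$, so $-i(c\tau+d)$ has positive real part and the principal branch is unambiguous). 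Also, that the constant is a $24$th root of unity follows cheaply from $\eta^{24}=\Delta$ having trivial multiplier in weight $12$, which you could use to shortcut part of the normalization. Finally, your induction should say a word about matrices with $c<0$ or $(c,d)=(0,-1)$, which are handled by factoring out $-I=S^2$; the lemma as stated only covers $c>0$ and $c=0,\,d=1$, so this is consistent but worth making explicit.
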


Finally, throughout we will utilize the notation 
\[f_r \coloneqq \prod\limits_{n=1}^\infty (1-q^{rn})\]
for convenience. 

\section{Connecting infinite congruence families}\label{sec:inf_fams}
We begin by defining the sequences of modular functions that will enumerate these partition functions of interest, which we note have generating functions given by

\begin{align*}
\sum\limits_{n=0}^\infty \mathrm{ped}(n) q^n &= \frac{f_4}{f_1}, \\
\sum\limits_{n=0}^\infty \mathrm{pod}(n) q^n &= \frac{f_2}{f_1f_4}, \\
\sum\limits_{n=0}^\infty \overline{p_o}(n) q^n &= \frac{f_2^3}{f_1^2 f_4}, \\
\sum\limits_{n=0}^\infty \overline{p}(n) q^n &= \frac{f_2}{f_1^2}.
\end{align*}

For $\alpha \geq 1,$ let

\begin{align*}
P_\alpha^{\mathrm{ped}} &\coloneqq \frac{\eta(6 \cdot 3^{2\alpha+1} \tau)^4}{ \eta( 3^{2\alpha+1} \tau)^2  \eta(3 \cdot 3^{2\alpha+1} \tau)  \eta(12 \cdot 3^{2\alpha+1} \tau) } \cdot \frac{\eta(4\tau)}{\eta(\tau)}, \\
P_\alpha^{\mathrm{pod}} &\coloneqq \frac{\eta(2 \cdot 3^{2\alpha+1}\tau)^4 \eta(4 \cdot 3^{2\alpha+1}\tau)^6}{\eta(3^{2\alpha+1}\tau)^4 \eta(3 \cdot 3^{2\alpha+1}\tau)^3 \eta(6 \cdot 3^{2\alpha+1}\tau) \eta(12 \cdot 3^{2\alpha+1}\tau)} \cdot \frac{\eta(2\tau)}{\eta(\tau)\eta(4\tau)},\\
P_\alpha^{\overline{p_o}} &\coloneqq \frac{\eta(6 \cdot 3^{2\alpha+1} \tau)\eta(12 \cdot 3^{2\alpha+1} \tau)}{\eta(3^{2\alpha+1} \tau)^2} \cdot \frac{\eta(2\tau)^3}{\eta(\tau)^2\eta(4\tau)}, \\
P_\alpha^{\overline{p}} &\coloneqq \frac{\eta(2\cdot 3^{2\alpha+1} \tau)^{22} \eta(12\cdot 3^{2\alpha+1} \tau)}{\eta(3^{2\alpha+1} \tau)^{10} \eta(3\cdot 3^{2\alpha+1} \tau)^2 \eta(4\cdot 3^{2\alpha+1} \tau)^6 \eta(6\cdot 3^{2\alpha+1} \tau)^4} \cdot \frac{\eta(4\tau)}{\eta(2\tau)^2}.
\end{align*}

Using Lemmas \ref{lem:eta-quotient1} it is straightforward to verify that $P_\alpha^{*}\in \mathcal{M}(\Gamma_0(12 \cdot 3^{2\alpha+1}))$ for $* \in \{\mathrm{ped},\mathrm{pod},\overline{p_o},\overline{p}\}.$ Hence, by Lemma \ref{lem:Umap} we conclude
\[L_\alpha^{*} \coloneqq U_3^{2\alpha+1} \left(P_\alpha^{*}\right) \in \mathcal{M}(\Gamma_0(12)).\]
Routine calculations reveal 
\begin{align*}
L_\alpha^{\mathrm{ped}} &= \frac{f_6^4}{f_1^2 f_3 f_{12}} \sum\limits_{n=0}^\infty \mathrm{ped}\left(3^{2\alpha+1}n+\frac{17 \cdot 3^{2\alpha}-1}{8}\right) q^{n+1}, \\
L_\alpha^{\mathrm{pod}} &= \frac{f_2^4 f_4^6}{f_1^4 f_3^3 f_6 f_{12}} \sum\limits_{n=0}^\infty \mathrm{pod}\left(3^{2\alpha+1}n+\frac{23 \cdot 3^{2\alpha}+1}{8}\right) q^{n+1}, \\
L_\alpha^{\overline{p_o}} &= \frac{f_6f_{12}}{f_1^2} \sum\limits_{n=0}^\infty \overline{p_o} \left(3^{2\alpha+1}n + 3^{2\alpha}\right) q^{n+1}, \\
L_\alpha^{\overline{p}} &= \frac{f_2^{22} f_{12}}{f_1^{10} f_3^2 f_4^6 f_6^4} \sum_{n=0}^\infty \overline{p}\left( 3^{2\alpha+1}n+2\cdot 3^{2\alpha} \right)q^{2n+1}.
\end{align*}

For example, using Lemma \ref{lem:U_PS} we have
\begin{align*}
L_\alpha^{\mathrm{ped}} &= U_3^{2\alpha+1} \left( q^{\frac{7 \cdot 3^{2\alpha}+1}{8}} \frac{f_{6\cdot 3^{2\alpha+1}}^4}{f_{3^{2\alpha+1}}^2 f_{3\cdot 3^{2\alpha+1}} f_{12\cdot 3^{2\alpha+1}}} \cdot \sum\limits_{n=0}^\infty \mathrm{ped}(n) q^n\right) \\
&= \frac{f_6^4}{f_1^2 f_3 f_{12}} U_3^{2\alpha+1} \left(\sum\limits_{n \geq \frac{7 \cdot 3^{2\alpha}+1}{8}} \mathrm{ped}\left(n - \frac{7 \cdot 3^{2\alpha}+1}{8}\right) q^n\right) \\
&= \frac{f_6^4}{f_1^2 f_3 f_{12}} \sum_{n=1}^\infty \mathrm{ped}\left(3^{2\alpha+1}n - \frac{7 \cdot 3^{2\alpha}+1}{8}\right) q^{n}\\
&= \frac{f_6^4}{f_1^2 f_3 f_{12}} \sum\limits_{n=0}^\infty \mathrm{ped}\left(3^{2\alpha+1}n + \frac{17 \cdot 3^{2\alpha} - 1}{8}\right) q^{n+1},
\end{align*}
as desired.

Note that we have constructed these modular functions so that the congruences \eqref{eq:pedcong}-\eqref{eq:pbarcong} are equivalent to the statements that
\begin{align*}
L_\alpha^{\mathrm{ped}} &\equiv 0\pmod{6}\\
L_\alpha^{\mathrm{pod}} &\equiv 0\pmod{3}\\
L_\alpha^{\overline{p_o}} &\equiv 0\pmod{6}\\
L_\alpha^{\overline{p}} &\equiv 0\pmod{12}.
\end{align*}

\section{Proof of Theorem \ref{thm:congfams}} \label{sec:proof}

Following recent work \cite{GSS, SellersSmoot} of Garvan, Sellers, and Smoot, we first define a so-called ``conjugation operator'' obtained by conjugating an Atkin-Lehner operator with the map sending $q\mapsto -q.$ More precisely, we consider
\[\nu:= \begin{pmatrix}1&1/2\\0&1\end{pmatrix}\qquad \text{and}\qquad W := \begin{pmatrix}4&-1\\12\cdot 3^{2\alpha+1}&1-3^{2\alpha+2}\end{pmatrix}\] (noting that $W$ is the well-known Atkin-Lehner operator for modular forms of level $N=12\cdot 3^{2\alpha+1}$ with prime power $q=4$ \cite[Section 2.4]{OnoWeb}) and use them to define
\[\nu W \nu = \begin{pmatrix}4+6\cdot 3^{2\alpha+1}& \frac{3+3^{2\alpha+2}}{2}\\ 12\cdot 3^{2\alpha+1}& 1 + 3^{2\alpha+2}\end{pmatrix},\]
which reduces to 
\[\gamma = \begin{pmatrix}2+3\cdot 3^{2\alpha+1}& \frac{3+3^{2\alpha+2}}{4}\\ 6\cdot 3^{2\alpha+1}& \frac{1 + 3^{2\alpha+2}}{2}\end{pmatrix}.\]

Then, in order to compute $P_\alpha^{\mathrm{ped}} \mid \gamma$ and $P_\alpha^{\mathrm{pod}} \mid \gamma,$ it suffices to compute $\eta(\delta \gamma \tau)$ for the appropriate positive integers $\delta.$ However, in order to apply Lemma \ref{lem:eta_trans}, we must factor the corresponding matrix via
\[\mat{A}{B}{C}{D} = \mat{A/g}{-y}{C/g}{x} \mat{g}{Bx+Dy}{0}{(AD-BC)/g}\]
(where $g:=\gcd(A,C)$ and $x,y\in \mathbb{Z}$ satisfy $Ax+By=g$) in order to obtain a matrix in $\SL.$

For example, if $\delta=4$ then we have
\begin{align*}
4\gamma(\tau) &= \mat{8+12\cdot 3^{2\alpha+1}}{3+3^{2\alpha+2}}{6\cdot 3^{2\alpha+1}}{\frac{1+3^{2\alpha+2}}{2}}(\tau) = \mat{4+6\cdot 3^{2\alpha+1}}{\frac{-1-3^{2\alpha+2}}{2}}{3^{2\alpha+2}}{\frac{1-3^{2\alpha+2}}{4}}\mat{2}{1}{0}{2}(\tau)\\
&=\mat{4+6\cdot 3^{2\alpha+1}}{\frac{-1-3^{2\alpha+2}}{2}}{3^{2\alpha+2}}{\frac{1-3^{2\alpha+2}}{4}}(\tau+1/2)\\
\end{align*}
and so
\begin{multline*}
\eta(4\gamma\tau) = e^{\pi i/24}\left(-i \left(3^{2\alpha+2}\left(\tau + \frac{1}{2}\right)+\frac{1-3^{2\alpha+2}}{4}\right) \right)^{1/2}\\ \cdot \varepsilon\left(4+6\cdot 3^{2\alpha+1}, \frac{-1-3^{2\alpha+2}}{2}, 3^{2\alpha+2}, \frac{1-3^{2\alpha+2}}{4} \right) \frac{\eta(2\tau)^3}{\eta(\tau)\eta(4\tau)} 
\end{multline*}
by Lemma \ref{lem:eta_trans}, together with the fact that $\eta\left(\tau + \frac{1}{2}\right) = e^{\pi i/24} \frac{\eta(2\tau)^3}{\eta(\tau)\eta(4\tau)}$ (see, for example, \cite[Lemma 2.1]{SellersSmoot}). Completing this calculation for each factor in $P_\alpha^{\mathrm{ped}}$ and $P_\alpha^{\mathrm{pod}}$ gives
\begin{align*}
P_\alpha^{\mathrm{ped}} \mid \gamma &= (-1)^\alpha \cdot P_\alpha^{\overline{p_o}}\\
P_\alpha^{\mathrm{pod}} \mid \gamma &= (-1)^{\alpha+1}\frac{1}{4} \cdot P_\alpha^{\overline{p}}.
\end{align*}
Since the $U_3$-operator commutes with both $W$ and $\nu$ (see \cite{AtkinLehner}), applying $U_3^{2\alpha+1}$ gives the desired result.

\section{Connecting internal congruences}\label{sec:internal}
In this section, we further strengthen the connection between PED (resp. POD) partitions and overpartitions into odd parts (resp. overpartitions) by pairing up the following internal congruences satisfied by these functions.
\begin{prop}\label{prop:internal_congs} 
The following internal congruences are true.
\begin{enumerate}
\item For all $n \geq 0,$ 
\[\mathrm{ped}(3n+1) \equiv -\mathrm{ped}(27n+10) \pmod{6}.\]
\item For all $n \geq 0,$ 
\[\mathrm{pod}(3n+2) \equiv - \mathrm{pod}(27n+17) \pmod{3}.\]
\item For all $n \geq 0,$ 
\[\overline{p_o}(3n) \equiv \overline{p_o}(27n) \pmod{12}.\]
\item For all $n \geq 0,$
\[\overline{p}(3n) \equiv \overline{p}(27n) \pmod{24}.\]
\end{enumerate}
\begin{proof}
\begin{enumerate}
\item This congruence follows immediately from Theorem 3.4 of \cite{AHSped} with $\alpha= 0, 1.$
\item This congruence follows immediately from the work of Hirschhorn and Sellers (\cite{HSpod}) in the proof of Theorem 4.1.
\item A proof of this congruence is given in Theorem 3.8 of \cite{HSopnodd}.
\item The congruence modulo 3 is Theorem 2.1 of \cite{HSoverpartitions}. From the work of Hirschhorn and Sellers in \cite{HSopn}, we have
\begin{align*}
\sum\limits_{n=0}^\infty \overline{p}(3n) (-q)^n &\equiv \frac{\phi(q^9)^{12}}{\phi(q^3)^{13}} \pmod{8}, \\
\sum\limits_{n=0}^\infty \overline{p}(27n) (-q)^n &\equiv \phi(q)^8 \phi(q^3)^{27} \pmod{8},
\end{align*}
where $\phi(q) \coloneqq \frac{f_2^5}{f_1^2 f_4^2}.$ It is straightforward to verify that the ratio
\[\Phi \coloneqq \frac{\phi(q)^8\phi(q^3)^{40}}{\phi(q^9)^{12}} \in M_{18}(\Gamma_0(36))\]
is a holomorphic modular form. If $E_6$ denotes the classical Eisenstein series of weight $6,$ then it is well-known that $E_6 \in M_6(\Gamma_0(1))$ and $E_6 \equiv 1 \pmod{8}.$ Hence, $\Phi - E_6^3 \in M_{18}(\Gamma_0(36)),$ and note that this function is identically 0 mod 8. Indeed, the Sturm bound for the space $M_{18}(\Gamma_0(36))$ is 108, so a brief computation of the Fourier coefficients of $\Phi - E_6^3$ up to $q^{108}$ is sufficient. This implies that $\Phi \equiv 1 \pmod{8}$, from which the desired internal congruence is immediate. 
\end{enumerate}
\end{proof}
\end{prop}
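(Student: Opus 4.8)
The plan is to reduce each of the four congruences to a finite verification via a Sturm bound, exactly as the author does for the mod-$8$ part of (4), but carried out uniformly for all four statements. Since the moduli factor as $6 = 2\cdot 3$, $12 = 4\cdot 3$, and $24 = 8\cdot 3$ into coprime prime powers, by the Chinese Remainder Theorem it suffices to prove each congruence modulo each prime-power factor separately and then recombine.

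For a fixed $a \in \{\mathrm{ped}, \mathrm{pod}, \overline{p_o}, \overline{p}\}$ with the relevant progressions $3n+c_1$ and $27n+c_2$, I would first note that $c_2 \equiv c_1 \pmod 3$ in every case, so that $\{27n+c_2\}$ is the sub-progression $\{3m+c_1 : m \equiv s \pmod 9\}$ of $\{3m+c_1\}$, with $s = (c_2-c_1)/3$ equal to $3, 5, 0, 0$ for $\mathrm{ped}, \mathrm{pod}, \overline{p_o}, \overline{p}$ respectively. Starting from the generating function $\sum_n a(n)q^n$ recorded in Section \ref{sec:inf_fams}, I would extract $A(q) := \sum_n a(3n+c_1)q^n$ and $B(q) := \sum_n a(27n+c_2)q^n$ by dressing $\sum_n a(n)q^n$ with suitable eta-quotients and applying Atkin's $U_3$-operator once and twice, respectively, precisely as in the computation of $L_\alpha^{\mathrm{ped}}$ in that section. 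Lemma \ref{lem:U_PS} pulls the eta-factors in $q^3$ and $q^9$ past the operator, while Lemma \ref{lem:Umap} keeps the outputs at a controlled level $\Gamma_0(N)$. In this way the target congruence $a(3n+c_1) \equiv \varepsilon\, a(27n+c_2) \pmod{\ell^k}$ becomes the single assertion that an explicit eta-quotient combination $F_a := (\text{eta-quotient})\cdot A - \varepsilon\,(\text{eta-quotient})\cdot B$ vanishes identically modulo $\ell^k$.

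To finish, I would convert $F_a$ into a holomorphic integer-weight modular form: Lemma \ref{lem:eta-quotient1} certifies modularity and Lemma \ref{lem:eta-quotient2} certifies holomorphy at every cusp, placing an eta-quotient multiple of $F_a$ in some space $M_w(\Gamma_0(N))$. To fix the weight and pin down the modulus, I would then multiply by a suitable power of a weight-normalizing Eisenstein series that is $\equiv 1$ modulo the relevant prime power --- for instance $E_6 \equiv 1 \pmod 8$, as in the author's handling of (4), together with analogous normalizers modulo $4$, $3$, and $2$. Vanishing modulo $\ell^k$ of the resulting holomorphic form is equivalent to the desired congruence, and the Sturm bound for $M_w(\Gamma_0(N))$ then reduces the proof to checking finitely many Fourier coefficients.

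The step I expect to be the main obstacle is the $2$-power part --- mod $4$ for $\overline{p_o}$ and mod $8$ for $\overline{p}$. The Sturm-bound argument is cleanest over a field, i.e. modulo a prime, so to run it modulo a prime power one must first exhibit a genuinely holomorphic form and then select the right power of an Eisenstein series $\equiv 1$ modulo $\ell^k$ to normalize the weight without inflating the level so badly that the Sturm bound (on the order of a hundred for the spaces that arise here) becomes computationally infeasible. Building these auxiliary forms and confirming their holomorphy at all cusps through Lemma \ref{lem:eta-quotient2} is the delicate part; by contrast, the mod-$3$ pieces can largely be matched to congruences already in the literature, namely the cited results of Andrews--Hirschhorn--Sellers and Hirschhorn--Sellers.
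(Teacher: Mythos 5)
Your strategy is sound, and for the one place where the paper actually does any work from scratch --- the mod $8$ congruence between $\overline{p}(3n)$ and $\overline{p}(27n)$ --- it is precisely the paper's argument: realize the two generating functions as holomorphic modular forms, normalize the weight with a power of $E_6\equiv 1\pmod 8$, and check coefficients up to the Sturm bound. The difference is that the paper proves only that one piece this way; parts (1), (2), (3) and the mod $3$ part of (4) are quoted directly from Andrews--Hirschhorn--Sellers and Hirschhorn--Sellers, so your uniform Sturm-bound treatment is a genuinely different, more self-contained but more computational, route for those parts (and it is feasible: the explicit $3$-dissections in Lemma \ref{lem:diss3} and Proposition \ref{prop:gen_funcs} supply the eta-quotient expressions your $A$ and $B$ require). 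Two details need tightening. First, extracting $B(q)=\sum_n a(27n+c_2)q^n$ from $\sum_n a(n)q^n$ takes three applications of $U_3$ (equivalently, $U_9$ applied to $A$), not two; your ``once and twice'' is off by one, although the sub-progression bookkeeping $m\equiv s\pmod 9$ shows you have the right picture. Second, your concern about Sturm's theorem modulo prime powers is legitimate but is not what the Eisenstein normalizer fixes --- that device only repairs the weight mismatch when comparing a form to a constant or to a form of different weight. The prime-power case is handled by the standard iteration: if $f\in M_k(\Gamma_0(N))$ has integer coefficients and vanishes mod $p$ up to the Sturm bound, then $f\equiv 0\pmod p$ identically, so $f/p$ is again a holomorphic form with integer coefficients and one repeats. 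The paper implicitly relies on this same extension in its mod $8$ computation, but in your write-up it deserves an explicit sentence. With those repairs your plan goes through.
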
 

In order to define modular functions that enumerate these internal congruences, we must first establish generating functions for $\mathrm{ped}(n), \mathrm{pod}(n), \overline{p_o}(n),$ and $\overline{p}(n)$ in certain arithmetic progressions. To this end, we require the following $3$-dissection.

\begin{lem}[Lemma 2.1 of \cite{Toh12}]\label{lem:diss3}
We have 
\[\frac{f_2}{f_1 f_4} = \frac{f_{18}^9}{f_3^2 f_9^3 f_{12}^2 f_{36}^3} + q \frac{f_6^2 f_{18}^3}{f_3^3 f_{12}^3} + q^2 \frac{f_6^4 f_9^3 f_{36}^3}{f_3^4 f_{12}^4 f_{18}^3}.\]
\end{lem}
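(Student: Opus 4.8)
The plan is to recognize the claimed identity as an equality of eta-quotients and to verify it by the same valence-formula/Sturm-bound strategy already used in the proof of Proposition~\ref{prop:internal_congs}(4). Writing $f_r = q^{-r/24}\eta(r\tau)$, each of the four terms in the statement is, up to an explicit power of $q$, an eta-quotient whose components $\eta(\delta\tau)$ all have $\delta \mid 36$, so the natural ambient group is $\Gamma_0(36)$. A quick check of eta-weights shows that the left-hand side $\frac{f_2}{f_1f_4} = \sum_{n\ge 0}\mathrm{pod}(n)q^n$ and each of the three right-hand terms have eta-weight $\frac12(1-1-1) = -\tfrac12$; in particular the asserted identity is homogeneous, and the explicit factors $q^0, q^1, q^2$ merely record the three residue classes modulo $3$ into which the dissection separates the coefficients of $\frac{f_2}{f_1f_4}$.

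First I would clear denominators. Multiplying the asserted identity through by the least common multiple $H$ of the three right-hand denominators $f_3^2 f_9^3 f_{12}^2 f_{36}^3$, $f_3^3 f_{12}^3$, $f_3^4 f_{12}^4 f_{18}^3$ together with the left-hand denominator $f_1 f_4$ turns it into a single relation $\mathcal{L}(\tau) = \mathcal{R}(\tau)$, where $\mathcal{L}$ and $\mathcal{R}$ are finite $\mathbb{Z}$-linear combinations of eta-products of one common integer weight $k$ (the fractional $q$-powers and the prefactors $q, q^2$ being absorbed into the $q^{1/24}$-normalizations of the $\eta(\delta\tau)$). Using Lemma~\ref{lem:eta-quotient1} I would confirm that each eta-product, hence $\mathcal{L} - \mathcal{R}$, lies in $M_k(\Gamma_0(36))$ (with a common quadratic nebentypus if the perfect-square condition is not met outright), and using Lemma~\ref{lem:eta-quotient2} I would check that $\mathcal{L}-\mathcal{R}$ has nonnegative order at every cusp of $\Gamma_0(36)$, so that it is a genuinely holomorphic modular form.

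Once $\mathcal{L} - \mathcal{R}$ is a holomorphic form in $M_k(\Gamma_0(36))$, it is determined by finitely many Fourier coefficients. Since $[\mathrm{SL}_2(\mathbb{Z}):\Gamma_0(36)] = 72$, the Sturm bound for this space is $\frac{k}{12}\cdot 72 = 6k$, so it suffices to compute the $q$-expansion of $\mathcal{L}-\mathcal{R}$ through $q^{6k}$ and to observe that every coefficient vanishes; dividing back out by $H$ then recovers the desired $3$-dissection.

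The main obstacle is the bookkeeping in the second step rather than any conceptual difficulty: one must choose $H$ so that the Ligozat integrality conditions of Lemma~\ref{lem:eta-quotient1} (the two congruences modulo $24$ and the perfect-square condition, possibly via a common character) are met after clearing, pin down the resulting weight $k$ precisely so that the Sturm bound is valid, and verify nonnegativity of the order at each cusp of $\Gamma_0(36)$ through Lemma~\ref{lem:eta-quotient2}. An alternative, more elementary route would assemble the dissection from classical $3$-dissections of the theta functions underlying $\frac{f_2}{f_1f_4}$, but that approach is more ad hoc, and I would expect the uniform modular-forms verification above to be cleaner to carry out.
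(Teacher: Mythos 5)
The paper gives no proof of this lemma: it is imported verbatim as Lemma 2.1 of \cite{Toh12}, so there is no internal argument to compare against. Your self-contained verification is nonetheless sound in outline, and it is essentially the same technique the paper deploys for the identity \eqref{eq:eta-identity} inside the proof of Proposition \ref{prop:gen_funcs}(3). Your weight bookkeeping checks out: writing $f_r = q^{-r/24}\eta(r\tau)$, each of the four terms equals $q^{1/8}$ times an eta-quotient of weight $-\tfrac12$ with every $\delta$ dividing $36$, so the identity does reduce to a finite coefficient check on $\Gamma_0(36)$ after clearing denominators, with Sturm bound $6k$ as you compute. Two details deserve more care than you give them. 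First, Lemma \ref{lem:eta-quotient1} as stated in the paper covers only weight-zero eta-quotients (it assumes $\sum_\delta r_\delta = 0$), so you must either invoke the general-weight Newman--Ligozat criterion or, more cleanly, divide the identity through by one of its four terms so that everything becomes a weight-zero element of $\mathcal{M}(\Gamma_0(36))$ and the paper's lemmas apply as written. Second, your parenthetical about a ``common quadratic nebentypus'' is doing real work: the Sturm argument requires all cleared terms to transform with the same character, and this must be verified, not assumed; it does hold here because $\prod_\delta \delta^{r_\delta}$ is congruent to $2$ modulo rational squares for each of the four terms. With those points attended to, your argument is a complete and checkable proof; the more classical route (presumably Toh's) builds the dissection from $3$-dissections of the underlying theta functions, which trades the modular machinery for more ad hoc $q$-series manipulation.
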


\begin{prop}\label{prop:gen_funcs}
The following generating functions are true. 
\begin{enumerate}
\item We have
\[\sum\limits_{n=0}^\infty \mathrm{ped}(9n+1)q^n = \frac{f_2^2 f_3^4 f_4}{f_1^5 f_6^2} + 24q \frac{f_2^3 f_3^3 f_4 f_6^3}{f_1^{10}}.\]
\item We have
\[\sum\limits_{n=0}^\infty \mathrm{pod}(9n+8) q^n = 10 \frac{f_2 f_6^{24}}{f_1^7 f_3^6 f_4^7 f_{12}^6} + 16q \frac{f_2^7 f_3^3 f_6^6 f_{12}^3}{f_1^{10} f_4^{10}} + q^2 \frac{f_2^{13} f_3^{12} f_{12}^{12}}{f_1^{13} f_4^{13} f_6^{12}}.\]
\item We have
\[\sum\limits_{n=0}^\infty \overline{p_0}(9n)q^n = \frac{f_2^5 f_3^4}{f_1^6 f_4 f_6^2}+24q\frac{f_2^6 f_3^3 f_6^3}{f_1^{11} f_4}.\]
\item We have
\[\sum\limits_{n=0}^\infty \overline{p}(9n) q^n = \frac{f_2^{13} f_3^{24}}{f_1^{26} f_6^{12}} + 128q \frac{f_2^{10} f_3^{15}}{f_1^{23} f_6^3} + 640q^2 \frac{f_2^7 f_3^6 f_6^6}{f_1^{20}}.\]
\end{enumerate}
\begin{proof}
\begin{enumerate}
\item This generating function appears as part of Theorem 3.2 of \cite{AHSped}. 
\item By Lemma \ref{lem:diss3}, 
\[\sum\limits_{n=0}^\infty \mathrm{pod}(n) q^n = \frac{f_2}{f_1 f_4} = \frac{f_{18}^9}{f_3^2 f_9^3 f_{12}^2 f_{36}^3} + q \frac{f_6^2 f_{18}^3}{f_3^3 f_{12}^3} + q^2 \frac{f_6^4 f_9^3 f_{36}^3}{f_3^4 f_{12}^4 f_{18}^3}.\]
Extracting the terms of the form $q^{3n+2}$ yields
\[\sum\limits_{n=0}^\infty \mathrm{pod}(3n+2) q^{3n+2} = q^2 \frac{f_6^4 f_9^3 f_{36}^3}{f_3^4 f_{12}^4 f_{18}^3},\]
which upon dividing by $q^2$ and replacing $q^3$ by $q$ yields
\[\sum\limits_{n=0}^\infty \mathrm{pod}(3n+2) q^n = \frac{f_2^4 f_3^3 f_{12}^3}{f_1^4 f_4^4 f_6^3}.\]
Another application of Lemma \ref{lem:diss3} implies
\[\sum\limits_{n=0}^\infty \mathrm{pod}(3n+2) q^n = \frac{f_3^3 f_{12}^3}{f_6^3} \left(\frac{f_{18}^9}{f_3^2 f_9^3 f_{12}^2 f_{36}^3} + q \frac{f_6^2 f_{18}^3}{f_3^3 f_{12}^3} + q^2 \frac{f_6^4 f_9^3 f_{36}^3}{f_3^4 f_{12}^4 f_{18}^3}\right)^4.\]
We now expand and again extract the terms of the form $q^{3n+2}$ to find
\[\sum\limits_{n=0}^\infty \mathrm{pod}(9n+8) q^{3n+2} = \frac{f_3^3 f_{12}^3}{f_6^3} \left(10 q^2 \frac{ f_6^4 f_{18}^{24}}{f_3^{10} f_9^6 f_{12}^{10} f_{36}^6} + 16q^5 \frac{ f_6^{10} f_9^3 f_{18}^6  f_{36}^3}{f_3^{13} f_{12}^{13}} + q^8 \frac{f_6^{16} f_9^{12} f_{36}^{12}}{f_3^{16} f_{12}^{16} f_{18}^{12}}\right).\]
Again we divide by $q^2$ and replace $q^3$ by $q$ to obtain 
\begin{align*}
\sum\limits_{n=0}^\infty \mathrm{pod}(9n+8) q^n &= \frac{f_1^3 f_4^3}{f_2^3} \left(10 \frac{f_2^4 f_6^{24} }{f_1^{10} f_3^6 f_4^{10} f_{12}^6} + 16q \frac{f_2^{10} f_3^3 f_6^6 f_{12}^3}{f_1^{13} f_4^{13}} + q^2 \frac{f_2^{16} f_3^{12} f_{12}^{12}}{f_1^{16} f_{4}^{16} f_6^{12}}\right) \\
&= 10 \frac{f_2 f_6^{24}}{f_1^7 f_3^6 f_4^7 f_{12}^6} + 16q \frac{f_2^7 f_3^3 f_6^6 f_{12}^3}{f_1^{10} f_4^{10}} + q^2 \frac{f_2^{13} f_3^{12} f_{12}^{12}}{f_1^{13} f_4^{13} f_6^{12}}.
\end{align*}

\item In the proof of Theorem 3.8 of \cite{HSopnodd}, Hirschhorn and Sellers established
\begin{align*}
\sum\limits_{n=0}^\infty \overline{p_o}(9n)q^n &= \frac{f_2^9 f_3^{12}}{f_1^{14} f_4 f_6^6} + 16q \frac{f_2^6 f_3^3 f_6^3}{f_1^{11} f_4} \\
&= \left(\frac{f_2 f_3^2}{f_1^2 f_6}\right)^4 \frac{f_2^5 f_3^4}{f_1^6 f_4 f_6^2} + 16q \frac{f_2^6 f_3^3 f_6^3}{f_1^{11} f_4},
\end{align*}
so it is sufficient to show 
\begin{equation}\label{eq:eta-identity}
\frac{f_2^4 f_3^8}{f_1^8 f_6^4} = 1 + 8q \frac{f_2 f_6^5}{f_1^5 f_3}.
\end{equation}
Lemmas \ref{lem:eta-quotient1} and \ref{lem:eta-quotient2} imply 
\begin{align*}
\frac{f_2^4 f_3^8}{f_1^8 f_6^4} &= \frac{\eta(2\tau)^4 \eta(3\tau)^8}{\eta(\tau)^8 \eta(6\tau)^4} \in \mathcal{M}(\Gamma_0(6)),\\
8q \frac{f_2 f_6^5}{f_1^5 f_3} &= 8 \frac{\eta(2\tau)\eta(6\tau)^5}{\eta(\tau)^5 \eta(3\tau)} \in \mathcal{M}(\Gamma_0(6)),
\end{align*}
and both have simple poles at the cusp $0$ and are holomorphic elsewhere. Since $\eta(\tau)^{24} \eqqcolon \Delta(\tau) \in M_{12}(\Gamma_0(1))$ vanishes at all cusps, we thus have
\[\Phi(\tau) \coloneqq \Delta(\tau) \left( \frac{\eta(2\tau)^4 \eta(3\tau)^8}{\eta(\tau)^8 \eta(6\tau)^4} - 8 \frac{\eta(2\tau)\eta(6\tau)^5}{\eta(\tau)^5 \eta(3\tau)}\right) \in M_{12}(\Gamma_0(6))\]
is a holomorphic modular form. We now claim that the above modular form is identically equal to $\Delta(\tau).$ Indeed, the Sturm bound for the space $M_{12}(\Gamma_0(6))$ is 12, and a comparison of the $q$-expansions of $\Phi(\tau)$ and $\Delta(\tau)$ reveals that they agree up to the coefficient of $q^{12}$ (and thus $\Phi(\tau) = \Delta(\tau)$). The equality of these functions immediately implies \eqref{eq:eta-identity}, completing the proof.
\item This identity quickly follows from the generating function for $\overline{p}(9n)$ found in the proof of Theorem 3 of \cite{HSopn}. 
\end{enumerate}
\end{proof}
\end{prop}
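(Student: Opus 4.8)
The plan is to derive each of the four generating functions by applying a $3$-dissection twice to the product generating function recorded in Section~\ref{sec:inf_fams}, thereby isolating the arithmetic progression $9n+c$ in two stages. For $\mathrm{pod}(9n+8)$, Lemma~\ref{lem:diss3} already supplies the $3$-dissection of $f_2/(f_1 f_4)$: I would first extract the terms of the form $q^{3n+2}$ to obtain $\sum \mathrm{pod}(3n+2)\,q^{3n+2}$, divide by $q^2$, and replace $q^3$ by $q$ to get a clean eta-quotient for $\sum \mathrm{pod}(3n+2)\,q^n$. Substituting the dissection of Lemma~\ref{lem:diss3} back in (after pulling out the factors that are already power series in $q^3$), I would raise it to the fourth power, expand, extract the $q^{3n+2}$ terms a second time, and reindex to reach $\sum \mathrm{pod}(9n+8)\,q^n$.

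The same two-stage strategy applies to the remaining three functions: extracting $q^{3n+1}$ and then $q^{3n}$ for $\mathrm{ped}$, and extracting $q^{3n}$ twice for $\overline{p_o}$ and $\overline{p}$, using in each case the appropriate $3$-dissection of the relevant base product. For these I would either derive the needed dissections from Lemma~\ref{lem:diss3} together with the classical dissections of $f_1$, $1/f_1$, and $f_2/f_1^2$, or simply cite the dissections already worked out by Andrews--Hirschhorn--Sellers and by Hirschhorn--Sellers, which cover several of these progressions directly.

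The expressions produced by this mechanical extraction will be unwieldy eta-quotients, and the real work is to collapse them into the two- or three-term forms stated. I would accomplish this through eta-quotient identities of the shape illustrated by \eqref{eq:eta-identity}: to prove a claimed identity $A=B$ between eta-quotients whose only poles lie at the cusp $0$, I would multiply through by a sufficiently high power of $\Delta=\eta^{24}$ to clear the poles, verify via Lemmas~\ref{lem:eta-quotient1} and~\ref{lem:eta-quotient2} that the results are holomorphic modular forms of a fixed weight on some $\Gamma_0(N)$, and then invoke the Sturm bound so that agreement of finitely many Fourier coefficients forces equality. The integer multipliers appearing in the statement ($24$, $10$, $16$, $128$, $640$, and so on) should emerge from the constant terms once the eta-quotients are normalized.

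I expect the main obstacle to be the second simplification stage: organizing the fourth-power expansion of the dissection so that only the $q^{3n+c}$ terms survive, and then recognizing the resulting sum as a short combination of eta-quotients. The bookkeeping is delicate, since many cross terms contribute to each residue class and a single sign or exponent slip would propagate through the Sturm-bound verification. A secondary difficulty is locating the correct auxiliary identities: the Sturm bound makes \emph{verifying} each identity routine, but \emph{guessing} the right closed form, and the right level $N$ on which to run the valence argument, requires some experimentation.
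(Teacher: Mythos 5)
Your proposal matches the paper's proof in all essentials: the paper handles $\mathrm{pod}(9n+8)$ exactly by the two-stage extraction you describe (extract $q^{3n+2}$ from Lemma~\ref{lem:diss3}, reindex, pull out the $q^3$-series factor, raise the dissection to the fourth power, extract $q^{3n+2}$ again), cites Andrews--Hirschhorn--Sellers and Hirschhorn--Sellers for parts (1), (3), and (4), and verifies the one needed auxiliary eta-quotient identity by clearing poles with $\Delta$ and invoking the Sturm bound on $M_{12}(\Gamma_0(6))$, just as you outline. The approach is correct and essentially identical.
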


We now consider the functions
\begin{align*}
P^{\mathrm{ped}} &\coloneqq \frac{\eta(9\tau)^2 \eta(12\tau)^3}{\eta(3\tau)^3 \eta(36\tau)^2}\left[ \frac{\eta(4\tau)}{\eta(\tau)} + \frac{\eta(2\tau)^2 \eta(3\tau)^4 \eta(4\tau)}{\eta(\tau)^5 \eta(6\tau)^2} + 24\frac{\eta(2\tau)^3 \eta(3 \tau)^3 \eta(4\tau) \eta(6\tau)^3}{\eta(\tau)^{10}}\right], \\
P^{\mathrm{pod}} &\coloneqq\frac{\eta(6\tau)^4 \eta(12\tau)^5 \eta(36\tau)^2}{\eta(3\tau) \eta(9\tau)^4 \eta(18\tau)^5} \left[ \frac{\eta(2\tau)}{\eta(\tau)\eta(4\tau)} + 10 \frac{\eta(2\tau) \eta(6\tau)^{24}}{\eta(\tau)^7 \eta(3\tau)^6 \eta(4\tau)^7 \eta(12\tau)^6}  \right. \\
&+ \left. 16 \frac{\eta(2\tau)^7 \eta(3\tau)^3 \eta(6\tau)^6 \eta(12\tau)^3}{\eta(\tau)^{10} \eta(4\tau)^{10}} + \frac{\eta(2\tau)^{13} \eta(3\tau)^{12} \eta(12\tau)^{12}}{\eta(\tau)^{13} \eta(4\tau)^{13} \eta(6\tau)^{12}} \right], \\
P^{\overline{p_o}} &\coloneqq \frac{\eta(6\tau)^9\eta(9\tau)^4\eta(36\tau)^2}{\eta(3\tau)^6 \eta(12\tau)^3\eta(18\tau)^6}\left[\frac{\eta(2\tau)^3}{\eta(\tau)^2\eta(4\tau)} - \frac{\eta(2\tau)^5\eta(3\tau)^4}{\eta(\tau)^6 \eta(4\tau) \eta(6\tau)^2} - 24\frac{\eta(2\tau)^6 \eta(3\tau)^3 \eta(6\tau)^3}{\eta(\tau)^{11}\eta(4\tau)}\right], \\
P^{\overline{p}} &\coloneqq \frac{\eta(6\tau)^{19}\eta(18\tau)}{\eta(3\tau)^6 \eta(9\tau)^6 \eta(12\tau)^5 \eta(36\tau)^2} \left[\frac{\eta(4\tau)}{\eta(2\tau)^2} - \frac{\eta(4\tau)^{13} \eta(6\tau)^{24}}{\eta(2\tau)^{26}\eta(12\tau)^{12}} - 128 \frac{\eta(4\tau)^{10}\eta(6\tau)^{15}}{\eta(2\tau)^{23}\eta(12\tau)^3} \right. \\
&- \left. 640\frac{\eta(4\tau)^7 \eta(6\tau)^6 \eta(12\tau)^6}{\eta(2\tau)^{20}}\right]. \\
\end{align*}

As before, using Lemmas \ref{lem:eta-quotient1} and \ref{lem:Umap} it is straightforward to check that $P^* \in \mathcal{M}(\Gamma_0(36))$ and thus $L^* \in \mathcal{M}(\Gamma_0(12))$, where
\[L^* \coloneqq U_3\left(P^*\right).\]
Note now that from Proposition \ref{prop:gen_funcs}(1),
\begin{align*}
P^{\mathrm{ped}} &=  q^{-1} \frac{f_9^2 f_{12}^3}{f_3^3 f_{36}^2} \left(\frac{f_4}{f_1} + \frac{f_2^2 f_3^4 f_4}{f_1^5 f_6^2} + 24q \frac{f_2^3 f_3^3 f_4 f_6^3}{f_1^{10}}\right) \\
&= q^{-1} \frac{f_9^2 f_{12}^3}{f_3^3 f_{36}^2}  \left(\sum\limits_{n=0}^\infty \mathrm{ped}(n) q^n + \sum\limits_{n=0}^\infty \mathrm{ped}(9n+1) q^n \right).
\end{align*}
In a similar manner, using the generating functions given in Proposition \ref{prop:gen_funcs}(2)-(4) we see
\begin{align*}
P^{\mathrm{pod}} &= q \frac{f_6^4 f_{12}^5 f_{36}^2}{f_3 f_9^4 f_{18}^5} \left(\sum\limits_{n=0}^\infty \mathrm{pod}(n) q^n + \sum\limits_{n=0}^\infty \mathrm{pod}(9n-1)q^n\right), \\
P^{\overline{p_o}} &= \frac{f_6^9 f_9^4 f_{36}^2}{f_3^6 f_{12}^3 f_{18}^6} \left(\sum\limits_{n=0}^\infty \overline{p_o} (n) q^n - \sum\limits_{n=0}^\infty \overline{p_o}(9n) q^n\right)  , \\
P^{\overline{p}} &= \frac{f_6^{19} f_{18}}{f_3^6 f_9^6 f_{12}^5 f_{36}^2}
\left(\sum\limits_{n=0}^\infty \overline{p}(n) q^{2n} - \sum\limits_{n=0}^\infty \overline{p}(9n) q^{2n}\right). 
\end{align*}

Therefore, by Lemma \ref{lem:Umap},
\begin{align*}
L^{\mathrm{ped}} &= \frac{f_3^2 f_4^3}{f_1^3 f_{12}^2}  \left(\sum\limits_{n=0}^\infty \mathrm{ped}(3n+1) q^{n} + \sum\limits_{n=0}^\infty \mathrm{ped}(27n+10) q^{n} \right), \\
L^{\mathrm{pod}} &= \frac{f_2^4 f_4^5 f_{12}^2}{f_1 f_3^4 f_6^5} \left(\sum\limits_{n=0} \mathrm{pod}(3n+2) q^{n+1} + \sum\limits_{n=0}^\infty \mathrm{pod}(27n+17) q^{n+1}\right), \\
L^{\overline{p_o}} &= \frac{f_2^9 f_3^4 f_{12}^2}{f_1^6 f_4^3 f_6^6}   \left(\sum\limits_{n=0}^\infty \overline{p_o}(3n) q^n - \sum\limits_{n=0}^\infty \overline{p_o}(27n) q^n\right) , \\
L^{\overline{p}} &= \frac{f_2^{19} f_6}{f_1^6 f_3^6 f_4^5 f_{12}^2}  \left(\sum\limits_{n=0}^\infty \overline{p}(3n) q^{2n} - \sum\limits_{n=0}^\infty \overline{p}(27n) q^{2n}\right) .
\end{align*}

As in Section \ref{sec:inf_fams}, we can connect these sequences via a conjugated Atkin-Lehner involution. Indeed, let $W$ be the Atkin-Lehner involution with $q=4$ on $\mathcal{M}(\Gamma_0(36))$ and recall that $\nu$ is the map defined by $q \mapsto -q.$ Then
\[W = \mat{28}{3}{36}{4}, \quad \nu = \mat{1}{\frac{1}{2}}{0}{1},\]
and we now define
\[\gamma \coloneqq \nu W \nu = \mat{46}{28}{36}{22}.\]

\begin{proof}[Proof of Theorem \ref{thm:main2}]
We utilize a similar approach to that of Section \ref{sec:proof}. We compute $P^{*} \mid \gamma$ by first simplifying $\eta(d\gamma\tau)$ for the appropriate positive integers $d$. For example, when $d=2$ we seek to understand
\[\eta(2\gamma\tau) = \eta\left(\mat{92}{56}{36}{22}\tau\right).\]
Notice that 
\[\mat{92}{56}{36}{22} = \mat{23}{5}{9}{2} \mat{4}{2}{0}{2}\]
where 
\[\mat{23}{5}{9}{2} \in \SL,\]
hence
\begin{align*} 
\eta(2\gamma \tau) = \eta\left(\mat{23}{5}{9}{2} (2\tau+1)\right) &= (-i(9(2\tau+1)+2)^{1/2} \cdot \varepsilon(23,5,9,2) \cdot \eta(2\tau+1) 
\end{align*} 
by Lemma \ref{lem:eta_trans}. The proof now follows in a straightforward but tedious manner by applying $\gamma$ to each factor and simplifying the resulting expressions as above, recalling the fact that the $U_3$-operator commutes with $\gamma.$ 
\end{proof}

\section{Closing thoughts}
We close by highlighting two questions that remain as interesting avenues for further study.

\begin{enumerate}
\item As shown by  Sellers and Smoot (\cite{SellersSmoot}), PEND and POND partitions are naturally connected via a conjugated Atkin-Lehner involution. However, since the generating functions for PED and POD partitions do not have the same weight, it is not possible to connect them in such a manner. We would be very interested to see PED and POD linked in another fashion, especially in light of their combinatorial definitions and the similarity of the congruences \eqref{eq:pedcong} and \eqref{eq:podcong}.
\item While Theorems \ref{thm:congfams} and \ref{thm:main2} provide a connection between congruences, they do not prove that the pairs of congruences are logically equivalent. In order to do so, one must link the proofs of the congruences, as in the work of Garvan, Sellers, and Smoot (\cite{GSS}). Currently, we do not have proofs via modular functions of the congruences in Theorems \ref{thm:congfams} and \ref{thm:main2}, so we encourage the interested reader to develop such proofs.
\end{enumerate}

\bibliographystyle{alpha} 
\bibliography{refs}
\end{document}